\definecolor{maroon}{rgb}{0,0.,0}
\newtheorem{theorem}[equation]{Theorem}
\newtheorem{lemma}[equation]{Lemma}
\theoremstyle{definition}
\newtheorem{example}[equation]{Example}
\theoremstyle{remark}
\newtheorem{remark}[equation]{Remark}
\makeatletter\@addtoreset{equation}{section} \makeatother
\newtheoremstyle{dotless}{}{}{\rm}{}{\sc}{}{ }{}
\theoremstyle{dotless}
\author{Joseph Malbon}
\title{Automorphisms of Fano Threefolds of Rank 2
and Degree 28}
\address{\textnormal{School of Mathematics, The University of Edinburgh,  Edinburgh, UK.}
\newline
\textnormal{\texttt{j.malbon@sms.ed.ac.uk}}}
\renewcommand{\P}{\mathbb{P}}
\renewcommand{\O}{\mathcal{O}}
\newcommand{\Z}{\mathbb{Z}}
\newcommand{\C}{\mathbb{C}}
\newcommand{\Ga}{\mathbb{G}_a}
\newcommand{\Gm}{\mathbb{G}_m}
\newcommand{\PGL}{\mathrm{PGL}_2(\mathbb{C})}
\newcommand{\Aut}{\mathrm{Aut}(Q,C_4)}
\begin{document}

\begin{abstract}
We describe the automorphism groups of smooth Fano threefolds of rank 2 and degree 28 in the cases where they are finite. 
\end{abstract}

\maketitle

\section{Introduction}

A smooth Fano threefold of Picard rank 2 and degree 28 is the blow-up of a smooth quadric threefold $Q\subset\P^4$ in a smooth rational quartic curve $C_4\subset Q$. The family of all such threefolds is denoted \textnumero2.21. Let $\pi\colon X\to Q$ be such a threefold. Then the action of $\text{Aut}(Q,C_4)$ on $Q$ lifts to an action on $X$, so that we may identify it with a subgroup of $\text{Aut}(X)$. \\ 

By a result of Cheltsov-Przyjalkowski-Shramov (\cite[Lemma 9.2]{CPS}), we have that either $\text{Aut}(X)$ is finite, or $\text{Aut}(X)\cong\text{Aut}(Q,C_4)\times\Z_2$, where upto isomorphism $\text{Aut}(Q,C_4)$ is described as follows:

\begin{enumerate}
    \item There is a unique smooth threefold in \textnumero2.21 such that $\text{Aut}(Q,C_4)\cong \PGL$,
    \item There is a one-dimensional family of non-isomorphic smooth threefolds in \textnumero2.21 such that $\text{Aut}(Q,C_4)\cong\Gm\rtimes\Z_2$,
    \item There is a unique smooth threefold in \textnumero2.21 such that $\text{Aut}(Q,C_4)\cong\Ga\rtimes\Z_2$.
    
\end{enumerate}

The goal of this paper is to describe $\text{Aut}(X)$ when it is finite, where $X$ is a smooth threefold in family \textnumero2.21. Our main result is the following:

\begin{theorem}
\label{theorem:main}
Let $X$ be a smooth Fano threefold of rank 2 and degree 28. Then $\mathrm{Aut}(X)\cong\mathrm{Aut}(Q,C_4)\times\Z_2$. Furthermore, if $\mathrm{Aut}(Q,C_4)$ is finite then it is isomorphic to $\Z_2\times\Z_2,\Z_2$ or $0.$
\end{theorem}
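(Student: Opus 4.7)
The theorem splits naturally into two claims: first, that $\mathrm{Aut}(X)\cong\Aut\times\Z_2$ holds even when $\mathrm{Aut}(X)$ is finite (extending \cite[Lemma 9.2]{CPS}), and second, the classification of the finite possibilities for $\Aut$. My plan is to handle these separately.

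For the first claim, I would construct an explicit involution $\iota\colon X\to X$ commuting with the $\Aut$-action and not coming from $Q$. Since $X$ has Picard rank $2$, its Mori cone has two extremal rays: one is contracted by $\pi\colon X\to Q$, the other by a second Mori contraction $\pi'\colon X\to Y$ which I would identify explicitly. The involution $\iota$ arises from the Sarkisov self-link interchanging these two contractions; because $\iota$ is canonically determined by $X$, it is centralised by any automorphism preserving both rays, so $\Aut\times\langle\iota\rangle\subseteq\mathrm{Aut}(X)$. Conversely, any $g\in\mathrm{Aut}(X)$ either preserves each extremal ray or swaps them, so $g$ or $g\iota$ descends along $\pi$ to an element of $\Aut$, giving equality.

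For the second claim, I would first note that restriction to $C_4$ gives an injection $\Aut\hookrightarrow\PGL$: any linear automorphism of $\P^4$ fixing $C_4$ pointwise is trivial, since $C_4$ spans $\P^4$. After choosing a rational-normal parametrisation $C_4=\{[s^4:s^3t:s^2t^2:st^3:t^4]\}$, the $6$-dimensional space of quadrics through $C_4$ decomposes under $\PGL$ as $\mathrm{Sym}^4\oplus\mathrm{Sym}^0$; under the diagonal $\Gm\subset\PGL$ it has weights $(4,2,0,0,-2,-4)$. I would then analyse the $\Z_n$-fixed loci in the resulting $\P^5$ for each finite cyclic $\Z_n\subset\PGL$: for $n=3$ or $n\geq 5$ only weight-zero vectors are invariant, so every $\Z_n$-invariant quadric is $\Gm$-invariant, forcing $\Aut$ to be infinite; for $n=4$ the fixed locus decomposes into several eigenspace components, but a direct Gram-matrix computation shows that only the weight-zero component contains smooth quadrics, so the stabiliser again contains $\Gm$. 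Consequently every element of a finite $\Aut$ has order dividing $2$, so the group is elementary abelian, and the only elementary abelian $2$-subgroups of $\PGL$ are $0$, $\Z_2$, and $\Z_2\times\Z_2$.

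I expect the main obstacles to be (a) the explicit identification of $\pi'$ and the construction of $\iota$ as a biregular (not just birational) involution that is compatible with the full automorphism group, and (b) the smoothness check in the $n=4$ case, which handles the borderline situation where $\Z_4$-invariance does not automatically promote to $\Gm$-invariance.
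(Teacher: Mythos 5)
Your treatment of the second claim is essentially the paper's own argument: identify $\Aut$ with a subgroup of $\PGL$, compute the weights of the $6$-dimensional space of quadrics through $C_4$ under a cyclic subgroup, and conclude that an element of order $n>2$ forces either a singular quadric or a stabiliser containing $\Gm$, so that a finite $\Aut$ is an elementary abelian $2$-group, hence $0$, $\Z_2$ or $\Z_2\times\Z_2$. One correction: for $n=3$ it is \emph{not} true that only weight-zero vectors are invariant --- the fixed locus in $\P^5$ also contains the lines $\{[a_0:0:0:a_3:0:0]\}$ and $\{[0:a_1:0:0:a_4:0]\}$, whose members are not $\Gm$-invariant. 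What saves the argument is that these quadrics are all singular (for instance $f_0+f_3$ is singular at $[0:1:0:0:1]$), so the $n=3$ case needs the same kind of explicit smoothness check that you reserve for $n=4$; this is exactly what Lemma \ref{Zn} does.

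The first claim is where the genuine gap lies. You invoke ``the Sarkisov self-link interchanging the two contractions,'' but the existence of a \emph{self}-link is precisely what must be proved. The second extremal contraction $\pi'\colon X\to Q'$ contracts the strict transform of $\mathrm{Sec}(C_4)\cap Q$ onto a rational quartic $C_4'$ in a smooth quadric $Q'$, so a priori the link lands on another member $(Q',C_4')$ of the family, and an involution of $X$ swapping the two rays exists if and only if $(Q',C_4')$ is projectively equivalent to $(Q,C_4)$. The assignment $(Q,C_4)\mapsto(Q',C_4')$ is an involution of the two-dimensional moduli space $(\P^5\setminus\Delta)//\PGL$, and there is no a priori reason for it to be the identity; the paper establishes this by exhibiting the involution explicitly in each of the three cases of Theorem \ref{theorem:classification} (via the adjoint-matrix Cremona involution of $\P^5$ through the Veronese surface in case (1), and explicit bases of the linear system $\mathfrak{d}$ in cases (2) and (3)). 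Moreover, even granting a self-link, your ``canonically determined'' argument does not yield the direct product: the lift is canonical only up to composition with elements of $\Aut$, so what one gets for free is an extension $1\to\Aut\to\mathrm{Aut}(X)\to\Z_2\to1$, which could a priori be $\Z_4$, $D_4$, or a non-central extension; the splitting as $\Aut\times\Z_2$ requires verifying that some choice of $\iota$ squares to the identity and commutes with $\Aut$, which the paper again does by direct computation in each case. You correctly flagged item (a) as an obstacle, but it is not a technical hurdle to be deferred --- it is the substance of the theorem.
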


We prove this theorem in two parts: Theorem \ref{Aut(Q,C)} and Theorem \ref{bir invol}.

\begin{remark}
The factor of $\Z_2$ appearing in the factorisation $\mathrm{Aut}(X)\cong\mathrm{Aut}(Q,C_4)\times\Z_2$ is generated by an involution $g$, which may be described as follows: 

Let $\mathfrak{d}$ denote the restriction to $Q$ of the linear system of quadric hypersurfaces in $\P^4$ which contain $C_4$, and let $\phi\colon Q\dashrightarrow\P^4$ be the corresponding rational map. The image of $\phi$ is a smooth quadric threefold, and $\phi$ contracts the intersection of the secant variety of $C_4$ with $Q$, $V$, onto a smooth rational curve $C_4'\subset Q'$. The base locus of $\phi$ is equal to $C_4$, so that there is a birational morphism $\pi'\colon X\to Q'$, where $X$ is the blow-up of $Q$ along $C_4$. This morphism contracts the strict transform, $E'$, of $V$ onto the curve $C_4'$. Thus, there is a commutative diagram
$$\begin{tikzcd}
	& X \\
	Q && {Q'}
	\arrow["\pi"', from=1-2, to=2-1]
	\arrow["{\pi'}", from=1-2, to=2-3]
	\arrow["\phi", dashed, from=2-1, to=2-3].
    \end{tikzcd}$$

In \cite{CPS}, it is shown that in cases (1) and (2) of the above classification, there exists a basis of $\mathfrak{d}$ such that $Q'=Q$ and $C_4'=C_4$, so that $\phi$ lifts to an involution $g\in\mathrm{Aut}(X)$. We will show in Theorem \ref{bir invol} that this is always the case. \\
\end{remark}

We can explicitly describe the threefolds appearing in \cite[Lemma 9.2]{CPS}. Let us fix some notation. Observe that after a projective transformation $C_4$ is the image of the Veronese embedding of $\P^1$ in $\P^4$:
\begin{align*}
    \P^1&\to\P^4 \\ 
    [u:v]&\mapsto[u^4:u^3v:u^2v^2:uv^3:v^4].
\end{align*}
The space of global sections of $\mathcal{I}_{C_4}(2)$ is generated by the~following quadratic forms:
\begin{align*}
f_0&= x_3^2 - x_2x_4, \\
f_1&= x_2x_3 - x_1x_4,\\ 
f_2&= x_2^2 - x_0x_4,\\ 
f_3&= x_1x_2 - x_0x_3,\\ 
f_4&= x_1^2 - x_0x_2,\\ 
f_5&= 3x_2^2 - 4x_1x_3 + x_0x_4,
\end{align*}
where $\mathcal{I}_{C_4}$ is the ideal sheaf of $C_4$ in $\P^4$, and $x_0$, $x_1$, $x_2$, $x_3$, $x_4$ are homogeneous coordinates on $\mathbb{P}^4$. Observe that the standard $\PGL$-action on $C_4$ lifts to an action on $\P^4$ such that $C_4$ is invariant. We fix the following subgroups of $\PGL$:
\begin{align*}
&\Z_2,\text{ generated by } \begin{pmatrix}0 & 1 \\ 1 & 0\end{pmatrix},\\
&\Gm,\text{ consisting of matrices } \begin{pmatrix}1 & 0 \\ 0 & t\end{pmatrix} \text{ for every } t\in\Gm,\\
&\Ga,\text{ consisting of matrices } \begin{pmatrix}1 & t \\ 0 & 1\end{pmatrix} \text{ for every } t\in\Ga.
\end{align*}
Now we can describe $\mathrm{Aut}(X)$ for the threefolds listed before:
\begin{example}(\cite[Section 5.9]{book})\textbf{.}
\label{Gm invariant}
     Let $Q$ be the quadric given by the equation 
$$(1-4s^2)f_2+f_5=0,$$
for some $s\in\C\setminus\{-1,0,1\}$. Then $Q$ is $\Gm$-invariant and $\Z_2$-invariant, and conversely any smooth quadric admitting a faithful $\Gm$-action is isomorphic, via an element of $\PGL$, to a quadric given by an equation of this form. Moreover, we have the following:
$$\mathrm{Aut}(Q,C_4)\cong
\begin{cases}
    \Gm\rtimes\Z_2, & s\neq\pm\frac{1}{2}, \\ 
    \PGL, & s=\pm\frac{1}{2}.
\end{cases}$$

The involution $g$ described before is given by:
$$\tau\colon[x_0:x_1:x_2:x_3:x_4]\mapsto[f_4:sf_3:s^2f_2:sf_1:f_0].$$
See \cite[Remark 5.52]{book} for an explanation of why $\tau\circ\tau\colon Q\dashrightarrow Q$ is the identity map on $Q\setminus C_4$.

\end{example}

\begin{example}
\label{Ga invariant}
    Suppose that the quadric $Q$ is given by the equation
    $$f_0+f_5=0.$$
    Then $Q$ is $\Ga$-invariant and $\Z_2$-invariant, and $\mathrm{Aut}(Q,C_4)\cong\Ga\rtimes\Z_2$. We will prove in case (2) of Theorem \ref{bir invol} that the blow-up of $Q$ in $C_4$ admits an action of the involution $g$. \\ 
\end{example}

\begin{remark} Recall that for a finite subgroup $G\subset\text{Aut}(Y)$, a variety $Y$ is called \textit{$G$-Fano} if it has terminal singularities, $-K_Y$ is ample and $\text{Cl}(Y)^G$ is rank 1. It is proven in \cite{Prokhorov} that the Hilbert scheme of conics on a smooth threefold $X$ from the family \textnumero2.21 is isomorphic to $\P^1\times\P^1$, with the degenerate conics being parameterised by a smooth curve $C\subset\P^1\times\P^1$ of bidegree $(2,2)$. If $X$ is $G$-Fano for the group $G=\langle g\rangle\cong\Z_2$, then this curve must be invariant upon swapping the two factors of $\P^1$.

An informal conjecture of Y. Prokhorov is that the invariance of this curve is a sufficient condition for $X$ to be $G$-Fano. It is proven in \cite{Druel} that every smooth curve in $\P^1\times\P^1$ of bidegree $(2,2)$ is invariant. As a corollary to Theorem \ref{theorem:main}, we have that every smooth threefold $X$ in the family \textnumero2.21 is $G$-Fano, so that Prokhorov's informal conjecture is true. For a detailed discussion of $G$-Fano threefolds, see \cite{Arman}. \\
\end{remark}

\begin{remark}
    Smooth threefolds in the family \textnumero2.21 are parametrised by $\P^5\setminus\Delta$, where $\Delta\subset\P^5$ is the discriminant locus of singular quadrics. The group $\PGL$ acts on this space, and it follows from Theorem \ref{theorem:main} that any two threefolds in \textnumero2.21 are isomorphic if and only if their corresponding points in the parameter space $\P^5\setminus\Delta$ lie in the same $\PGL$-orbit. Moreover, the moduli space of smooth GIT-polystable threefolds in \textnumero2.21 is given by the GIT quotient
$$(\P^5\setminus\Delta)//\PGL.$$
\end{remark}

\textbf{Acknowledgements.}
I am grateful to I. Cheltsov for introducing me to this topic, and for his attentive guidance, support and insight with regards to the creation of this document. I would also like to thank I. Dolgachev for his useful observation in the proof of case (1) in Theorem \ref{bir invol}. \\ 

\section{Computation of $\text{Aut}(Q,C_4)$}
The first half of proving Theorem \ref{theorem:main} is the computation of $\Aut$, which we will do in this section. The result we will prove is: 

\begin{theorem}
\label{Aut(Q,C)}
    Let $Q$ be a smooth quadric threefold containing the quartic curve $C_4$. If $\Aut$ is finite, then it is isomorphic to either $\Z_2\times\Z_2$, $\Z_2$ or $0$.
\end{theorem}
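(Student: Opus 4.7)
The plan is to restrict the action to $C_4$. Any $g\in\Aut$ induces an automorphism of $C_4\cong\P^1$, giving a homomorphism $\Aut\to\PGL$; this is injective since $C_4$ is the Veronese image of $\P^1$ and in particular spans $\P^4$, so no nontrivial linear automorphism of $\P^4$ fixes it pointwise. Hence $\Aut$ embeds as a subgroup of $\PGL$. By the classical classification the finite subgroups of $\PGL$ are the cyclic $\Z_n$, the dihedral $D_n$, and the polyhedral groups $A_4,S_4,A_5$; the only ones among these that do not contain a cyclic subgroup of order $n\geq 3$ are $0$, $\Z_2$, and $\Z_2\times\Z_2$. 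So it suffices to show that if $\Aut$ contains $\Z_n$ for some $n\geq 3$, then $\Aut$ is infinite.

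Fix such an $n$. After conjugation in $\PGL$, we may assume $\Z_n$ is the order-$n$ subgroup of the $\Gm\subset\PGL$ from the introduction, acting on the homogeneous coordinates of $\P^4$ by $x_i\mapsto\zeta^i x_i$ with $\zeta$ a primitive $n$-th root of unity. A direct computation shows that the generators $f_0,\ldots,f_5$ of $H^0(\mathcal{I}_{C_4}(2))$ are $\Gm$-eigenvectors of weights $6,5,4,3,2,4$ respectively. Thus $Q$ is $\Z_n$-invariant if and only if its defining polynomial lies in a single $\Z_n$-eigenspace, i.e., in the span of those $f_i$ whose weights agree modulo $n$. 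The key claim is that among all such $\Z_n$-eigenspaces the only one containing a smooth quadric is $\C f_2+\C f_5$. Once this is granted, $Q$ lies in this single $\Gm$-eigenspace and is therefore $\Gm$-invariant, so $\Gm\subseteq\Aut$ (and in fact $\Aut\supseteq\Gm\rtimes\Z_2$ by Example~\ref{Gm invariant}), contradicting the assumption that $\Aut$ is finite.

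To verify the claim I would split on $n$. For $n\geq 5$ the weights $2,3,4,5,6$ are distinct modulo $n$, so the $\Z_n$-eigenspaces other than $\C f_2+\C f_5$ are the four lines $\C f_i$ for $i\in\{0,1,3,4\}$, each defining a quadric of rank $3$ or $4$ and hence singular. For $n=4$ one additionally obtains the eigenspace $\C f_0+\C f_4$, and for $n=3$ the eigenspaces $\C f_0+\C f_3$ and $\C f_1+\C f_4$ appear; in each of these cases a short Jacobian calculation shows that every member of the pencil has a singular line in $\P^4$ (and the degenerate members have even smaller rank). This systematic smoothness check is the main technical obstacle of the proof: while each computation is routine, one must handle the small values $n=3,4$ separately because for those $n$ the $\Z_n$-eigenspace decomposition of $H^0(\mathcal{I}_{C_4}(2))$ is strictly coarser than the $\Gm$-decomposition, and the new combinations of $f_i$'s of different $\Gm$-weights need to be ruled out individually.
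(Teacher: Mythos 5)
Your proposal is correct and follows essentially the same route as the paper: its Lemma~\ref{Zn} carries out the identical weight computation (the $f_i$ have $\Gm$-weights $6,5,4,3,2,4$) viewed as the action on the parameter space $\P^5=\P\bigl(H^0(\mathcal{I}_{C_4}(2))\bigr)$, checks that for $n=3,4$ and $n>4$ the invariant quadrics outside the $\Gm$-eigenspace $\C f_2+\C f_5$ are all singular, and then invokes the classification of finite subgroups of $\PGL$ exactly as you do. The only slip is cosmetic: the generic members of the pencils $\C f_0+\C f_4$, $\C f_0+\C f_3$ and $\C f_1+\C f_4$ are rank-$4$ cones singular at a single point rather than along a line, but they are singular nonetheless, so your verification and conclusion stand.
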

The following lemma will be useful:

\begin{lemma}
\label{Zn}
    Let $Q\subset\P^4$ be any quadric hypersurface containing the curve $C_4$. Suppose that $\Aut$ is finite, and contains an element of finite order $n>2$. Then $Q$ is singular.
\end{lemma}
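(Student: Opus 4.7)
The plan is to exploit the identification $\Aut \subseteq \PGL$: any projective automorphism of $\P^4$ preserving the rational normal quartic $C_4$ is induced by an automorphism of $C_4 \cong \P^1$, and the lift to $\P^4$ is unique because $C_4$ spans $\P^4$. Given $\sigma \in \Aut$ of order $n > 2$, the element $\sigma$ is semisimple, since every finite-order element of $\PGL$ is conjugate to a diagonal matrix. After conjugating we may assume $\sigma = \mathrm{diag}(1,\zeta)$ with $\zeta$ a primitive $n$-th root of unity, so that $\sigma$ sits inside the one-parameter torus $T = \{\mathrm{diag}(1,\lambda) : \lambda \in \Gm\} \subset \PGL$.

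Next I would compute the $T$-weights of the basis of $H^0(\mathcal{I}_{C_4}(2))$. Under the Veronese parametrisation, the coordinate $x_i = u^{4-i}v^i$ has $T$-weight $i$, so the weights of $f_0, f_1, f_2, f_3, f_4, f_5$ are $6, 5, 4, 3, 2, 4$ respectively. A quadric $Q = \sum c_i f_i$ is preserved by $\sigma$ as a hypersurface if and only if all indices $i$ with $c_i \neq 0$ share a weight modulo $n$. If all such weights are in fact equal in $\Z$, then $Q$ is fixed by the whole torus $T$, giving $T \subseteq \Aut$ and contradicting the finiteness hypothesis. This already rules out every $Q$ whose support is either a singleton or contained in the pair $\{f_2, f_5\}$.

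The remaining candidates can only occur when $n \in \{3,4\}$, and must have support involving weights which differ in $\Z$ but coincide modulo $n$. Reading off the multiset $\{2,3,4,4,5,6\}$, the possibilities are: for $n=4$, $Q = a f_0 + b f_4$ with $ab \neq 0$ (weights $6, 2 \equiv 2 \pmod 4$); for $n=3$, $Q = a f_0 + b f_3$ or $Q = a f_1 + b f_4$ with $ab \neq 0$ (weights $6, 3 \equiv 0 \pmod 3$, resp.\ $5, 2 \equiv 2 \pmod 3$). In each of these three cases I would verify directly that the symmetric Gram matrix of $Q$ has vanishing determinant: the quadrics $f_0, f_1, f_3, f_4$ each involve only three of the five variables, and a Laplace expansion along the row/column of the absent variable immediately produces a $4 \times 4$ minor with a zero row, or reduces to a singular $3 \times 3$ block. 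In all such cases $Q$ is singular, which is the desired conclusion.

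The principal obstacle is simply the case bookkeeping for $n \in \{3,4\}$, where the weight coincidences modulo $n$ enlarge the list of candidate $\sigma$-invariant quadrics beyond the torus-invariant ones. No delicate step arises beyond this: the weight analysis reduces the problem to three explicit two-parameter families of sparse quadrics, each of which is seen to be singular by inspection of its Gram matrix.
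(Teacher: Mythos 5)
Your proposal is correct and follows essentially the same route as the paper: diagonalise the order-$n$ element inside the torus, read off the weights $6,5,4,3,2,4$ of $f_0,\dots,f_5$ on $H^0(\mathcal{I}_{C_4}(2))$, discard the candidates fixed by the whole torus using the finiteness hypothesis, and check that the residual invariant families (arising only for $n=3,4$) are singular. One small slip in the last step: $f_1$ and $f_3$ involve four of the five variables, not three, so the stated Laplace-expansion mechanism does not literally apply to $af_0+bf_3$ and $af_1+bf_4$; nevertheless in each of the three cases two rows of the Gram matrix are proportional (both supported in the single column of $x_2$), so the singularity conclusion stands.
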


\begin{proof}
Since $\Aut\subseteq\mathrm{Aut}(\P^4,C_4)\cong\PGL$, we may identify $\Aut$ with a subgroup of $\PGL$. Moreover, by considering the action of $\PGL$ on the parameter space $\P^5$, we identify $\Aut$ with the stabiliser of the point of $\P^5$ corresponding to $Q$. Fix a finite cyclic subgroup $G\subset\text{PGL}_2(\C)$ of order $n$, and let $g_1\in G$ be a generator. Then $g_1$ fixes precisely two distinct points of $\P^1$, which upto projective transformation are $[0:1]$ and $[1:0]$. Thus
$$g_1=\begin{pmatrix} 1 & 0 \\ 0 & \zeta\end{pmatrix},$$
for some primitive $n^\text{th}$ root of unity $\zeta$. Then $g_1$ acts on $\P^5$ by:
$$[a_0:a_1:a_2:a_3:a_4:a_5]\mapsto[\zeta^6a_0:\zeta^5a_1:\zeta^4a_2:\zeta^3a_3:\zeta^2a_4:\zeta^4a_5],$$

and we can read off the points of $\P^5$ whose stabiliser contains $G$:

\begin{itemize}
    \item $n=2$: $(\P^5)^G = \big\{[a_0:0:a_2:0:a_4:a_5],[0:a_1:0:a_3:0:0]\big\}$, 
    \item $n=3$: $(\P^5)^G = \big\{[a_0:0:0:a_3:0:0], [0:a_1:0:0:a_4:0]\big\}$,
    \item $n=4$: $(\P^5)^G = \big\{[a_0:0:0:0:a_4:0]\big\},$
    \item $n>4$: $(\P^5)^G=\varnothing$,
\end{itemize}
where the numbers $a_0,a_1,a_2,a_3,a_4,a_5$ are all arbitrary complex numbers. 
One checks that for $n>2$, the corresponding threefolds which have finite $\Aut$ are all singular.    
\end{proof}

Now let us recall the following classification theorem for quadric threefolds which contain the curve $C_4$:

\begin{theorem}[\cite{K-STAB}]
\label{theorem:classification}
Let $Q\subset\P^4$ be a smooth quadric containing $C_4$. Then there exists an automorphism $\phi\in\PGL$ such that $\phi(Q)$ is given by one of the following equations:
\begin{enumerate}[$(1)$]
\item $\mu(f_0+f_4)+\lambda f_2+f_5=0$, for some $\lambda\in\C\setminus\{1,-3\}$ and $\mu\in\C\setminus\{2,-2\}$ such that $\mu^2\neq-\lambda^2-2\lambda+3$,
\item $f_0+\lambda f_2+ f_5=0$, for some $\lambda\in\mathbb{C}\setminus\{1,-3\}$,
\item $f_1+f_5=0$.
\end{enumerate}
\end{theorem}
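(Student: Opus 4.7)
The plan is to decompose $V := H^0(\mathcal{I}_{C_4}(2))$ as a $\PGL$-representation, use this decomposition to put an arbitrary $Q$ into normal form under $\PGL$, and then read off the smoothness exclusions from each normal form.

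First I would determine the $\PGL$-module structure of $V$. Since $C_4$ is the image of the fourth Veronese embedding of $\P^1$, we have $H^0(\P^4, \mathcal{O}(1)) \cong \mathrm{Sym}^4\C^2$ as a $\PGL$-representation, and hence by Clebsch--Gordan
\[
H^0(\P^4, \mathcal{O}(2)) \cong \mathrm{Sym}^2(\mathrm{Sym}^4\C^2) \cong \mathrm{Sym}^8\C^2 \oplus \mathrm{Sym}^4\C^2 \oplus \C.
\]
Since the restriction map to $H^0(C_4, \mathcal{O}(8)) \cong \mathrm{Sym}^8\C^2$ is surjective with kernel $V$, one obtains $V \cong V_5 \oplus V_1$, with $V_5 \cong \mathrm{Sym}^4\C^2$ and $V_1$ trivial. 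A direct computation (for instance, by checking annihilation by the raising operator of $\mathfrak{sl}_2$) identifies $V_1 = \C \cdot f_5$ as the unique $\PGL$-invariant line. Any $Q \in V$ therefore decomposes uniquely as $Q = v + c f_5$ with $v \in V_5$ and $c \in \C$, and $\PGL$ sends $Q$ to $\phi(v) + c f_5$.

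Next I would classify $\PGL$-orbits on $V_5$. Identifying $V_5$ with the space of binary quartics in $(u,v)$, the orbits are determined by the multiplicity pattern of the four roots in $\P^1$. For the generic stratum---including the cases $v=0$, four distinct roots, and two double roots---the classical fact that four distinct points in $\P^1$ admit a pair of commuting involutions in $\PGL$ permuting them pairwise lets me, after choosing coordinates so that the fixed points of one such involution are $[0:1]$ and $[1:0]$, put $v$ into $\Z_2 \times \Z_2$-symmetric form, which lies in the span of $f_0+f_4$, $f_2$, and $f_5$; after rescaling $Q$ so that its $f_5$-coefficient is $1$ this yields the expression of case~(1). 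For the $2{+}1{+}1$ and $4$ strata, moving the highest-multiplicity root to $[1:0]$ produces a $v$ in the span of $f_0, f_2, f_5$, yielding case~(2). For the $3{+}1$ stratum, moving the triple root to $[1:0]$ and the simple root to $[0:1]$ gives $v$ proportional to $f_1$, yielding case~(3).

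Finally I would identify the smoothness exclusions by computing the determinant of the symmetric matrix of the quadric in each normal form, as a polynomial in the normal-form parameters; its irreducible factors cut out the singular locus in parameter space. The conditions $\mu \in \{\pm 2\}$, $\lambda \in \{1, -3\}$, and $\mu^2 = -\lambda^2 - 2\lambda + 3$ in case (1), and $\lambda \in \{1, -3\}$ in case (2), are precisely those for which the quadric acquires a singular point (which one can also locate explicitly by hand, typically of the form $[1:0:0:0:-1]$ or $[0:0:1:0:0]$). I expect the main obstacle to lie in the generic case: the map from $\PGL$-orbits of pairs $(v, c)$ to case~(1) parameters $(\mu, \lambda)$ is finite-to-one, reflecting the residual Klein four-group of root-permuting involutions in $\PGL$, so one must check that each excluded parameter value corresponds to a genuinely singular quadric rather than merely a redundant parameterisation of a smooth one.
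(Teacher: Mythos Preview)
The paper does not prove this statement; it is cited from \cite{K-STAB} and used as a black box, so there is no proof in the paper to compare yours against.

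On its own merits your outline is sound and is the natural route to such a classification. The decomposition $V\cong \mathrm{Sym}^4\C^2\oplus\C$ with the trivial summand spanned by $f_5$ is correct (one checks $E\cdot f_5=F\cdot f_5=0$ for the standard $\mathfrak{sl}_2$ generators), and the reduction to the root-type stratification of binary quartics produces exactly the three cases with the stratum assignment you give. One small point to watch: $f_2$ does not lie in $V_5$ --- its $V_1$-component is $-\tfrac{1}{3}f_5$ --- so the literal $f_5$-coefficient in the basis $f_0,\dots,f_5$ is not the same as the $\PGL$-invariant $V_1$-projection. Normalising that coefficient to $1$ in each case therefore requires, beyond the rescaling you mention, the residual torus (to rescale the $f_0$-coefficient in case~(2) and the $f_1$-coefficient in case~(3)) together with the check that the boundary quadrics where the coefficient cannot be made nonzero, such as $af_0+bf_2$ or $f_0+f_4$, are genuinely singular. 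These are routine verifications and your sketch is otherwise complete.
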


Let us find $\text{Aut}(Q,C_4)$ in each of these cases. 
\begin{proof}[Proof of Theorem \ref{Aut(Q,C)}.] We may assume that $\mu\neq0$ in case (1), and $\lambda\neq0$ in case (2), as otherwise the threefolds are isomorphic to those which are described in Example \ref{Gm invariant} and Example \ref{Ga invariant}. Then $\Aut$ is finite, and since $\Aut$ is isomorphic to a subgroup of $\PGL$, it must be isomorphic to one of the following groups:
$$0,\Z_n,\Z_2\times\Z_2,D_{2n},\mathfrak{A}_4,\mathfrak{S}_4,\mathfrak{A}_5
,$$ where $\mathfrak{S}_n$ (resp. $\mathfrak{A}_n$) is the symmetric (resp. alternating) group on $n$ letters. Then by Lemma \ref{Zn} the only possibilities are that $\Aut$ is isomorphic to $\Z_2\times\Z_2, \Z_2$ or $0$.\\

Suppose $Q$ is in case (1). Then $Q$ admits an action of $\Z_2\times\Z_2$, generated by $g_1,g_2\in\PGL$, which are given by:
$$g_1=\begin{pmatrix}1 & 0 \\ 0 & -1\end{pmatrix},g_2=\begin{pmatrix}0 & 1 \\ 1 & 0 \end{pmatrix}.$$

Hence, $\Aut\cong\Z_2\times\Z_2$. \\ 

Suppose that $Q$ is in case (2). Then $Q$ admits an action of the group $\Z_2$, generated by the element $g_1$. Suppose that $\Aut\cong\Z_2\times\Z_2$, and let $g\in\Aut$ be a non-trivial element distinct from $g_1$. Considering the standard action of $\PGL$ on $\P^1$, observe that $g_1$ fixes the points $[0:1]$ and $[1:0]$, and since $gg_1=g_1g$, we see that $g$ must swap these points. Since $g$ has order 2, it must be equal to either $g_2$ or $g_1g_2$. The threefold $Q$ is not invariant under either of these. \\ 

Finally suppose that $Q$ is in case (3), and  suppose that $\Aut$ is non-trivial. Then it contains an element, $g$, of order 2. Since $g$ fixes two distinct points of $\P^1$, it must be equal to $g_2$, $g_1g_2$, or be given by a matrix of the form 
$$\begin{pmatrix}
    1 & a \\ 
    b & -1
\end{pmatrix},\text{ for some $a,b\in\C$ such that $ab\neq-1$}.$$
One checks that $g_2$ nor $g_1g_2$ leave $Q$ invariant, and if $g$ is given by a matrix of the above form then $g(Q)$ is given by the equation:
$$4bf_0+2(1-3bc)f_1-3c(1-bc)f_2+2c^2(3-bc)f_3-4c^3f_4+(bc^2-2b^2c^2-4bc-c-2)f_5=0$$
Clearly $g(Q)\neq Q$, so that $\Aut$ has to be trivial.
\end{proof}

\section{Existence of a birational involution of $Q$}
The second half of proving Theorem \ref{theorem:main} is the assertion that $\mathrm{Aut}(X)\cong\mathrm{Aut}(Q,C_4)\times\Z_2$, which we will do in this section. The result is:
\begin{theorem}
\label{bir invol}
Let $X$ be a smooth Fano threefold in family \textnumero2.21. Then there exists an involution $g\in\mathrm{Aut}(X)$ such that $\mathrm{Aut}(X)\cong \mathrm{Aut}(Q,C_4)\times\langle g\rangle$.
\end{theorem}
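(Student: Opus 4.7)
The plan is to split into cases following Theorem~\ref{theorem:classification} together with the examples in the introduction, and in each case to exhibit an explicit birational involution $\tau\colon Q \dashrightarrow Q$ defined by a suitable basis of the linear system $\mathfrak{d}$ of quadrics containing $C_4$. Examples~\ref{Gm invariant} and~\ref{Ga invariant} already supply such a $\tau$ when $Q$ is $\Gm$- or $\Ga$-invariant, so it remains to treat cases (1), (2), (3) of Theorem~\ref{theorem:classification}. Having produced $\tau$, the desired involution $g\in\mathrm{Aut}(X)$ will be the lift of $\tau$ across the blow-up $\pi\colon X\to Q$, which is a biregular morphism because the base locus of $\tau$ is exactly $C_4$ (as $\mathfrak{d}$ is the full linear system of quadrics containing $C_4$).

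For each of the three cases I would write an ansatz
$$\tau\colon [x_0:\cdots:x_4]\mapsto[L_0:L_1:L_2:L_3:L_4]$$
in which each $L_i$ is a linear combination of $f_0,\ldots,f_5$ with coefficients to be determined in terms of the parameters $\lambda$ and $\mu$, and impose the two conditions that $\tau(Q)\subseteq Q$ and that $\tau\circ\tau$ is the identity on a dense open subset of $Q$. Guided by Example~\ref{Gm invariant}, a natural first guess in cases~(1) and~(2) is $\tau = [f_4:af_3:bf_2:af_1:f_0]$ with scalars $a,b$ depending on $\lambda,\mu$; case~(3) will require a more general ansatz of similar shape. The resulting polynomial identities in $x_0,\ldots,x_4$ (modulo the defining equation of $Q$) reduce to a small algebraic system in the unknown coefficients, which I would solve directly.

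Next, to obtain the full structure $\mathrm{Aut}(X)\cong\Aut\times\langle g\rangle$, I would use the Mori-theoretic structure of $X$. Since $\mathrm{Pic}(X)$ has rank~2, the Mori cone has exactly two extremal rays, contracted by the divisorial morphisms $\pi$ and $\pi'$. Any $h\in\mathrm{Aut}(X)$ either preserves both rays, in which case $h$ descends via $\pi$ to an element of $\Aut$, or swaps them, in which case $g\circ h$ preserves both rays and again descends. Hence $\mathrm{Aut}(X)=\Aut\,\cup\,g\cdot\Aut$ and $\Aut$ is normal of index two. To upgrade this to a direct product, I would verify that $g$ commutes with every element of $\Aut$; using the explicit formula for $\tau$ this is a direct calculation, since by Theorem~\ref{Aut(Q,C)} the finite cases contribute at most $\Z_2\times\Z_2$, and in the infinite cases $\Aut$ is generated by the one-parameter subgroup together with $g_2$, each of which is easy to test against $\tau$.

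The main obstacle I foresee is case~(3) of Theorem~\ref{theorem:classification}, where $Q$ is defined by $f_1+f_5=0$ and $\Aut$ is trivial: there is no nontrivial linear symmetry to constrain the ansatz, so one must solve the full algebraic system for the coefficients of $\tau$ without \emph{a priori} simplification. Cases~(1) and~(2) are better behaved because the residual $\Z_2\times\Z_2$ and $\Z_2$ symmetries force $\tau$ to intertwine with the generator $g_2=\begin{pmatrix}0 & 1 \\ 1 & 0\end{pmatrix}$, which pins down most of the coefficients immediately. A secondary technical point is confirming that the lift of $\tau$ to $X$ indeed contracts the strict transform of the secant-variety divisor $V\cap Q$ onto $C_4$, but this is automatic from the universal property of blowing up once $\tau$ is known to be defined by the complete linear system $\mathfrak{d}$.
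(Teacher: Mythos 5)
Your overall architecture is sound and, for cases (2) and (3) of Theorem \ref{theorem:classification}, essentially coincides with the paper's: there the proof exhibits an explicit basis of $\mathfrak{d}$ (the maps $\iota$ and $\sigma$), verifies the involution property by composing with a rational parametrisation of $Q$ given by projection from a point, and checks commutation with $\Aut$ directly. Your Mori-cone argument for why $\mathrm{Aut}(X)$ is generated by $\Aut$ and $g$ is a useful explicit supplement to the paper, which delegates that structural statement to \cite[Lemma 9.2]{CPS}. The observation that the lift of $\tau$ is biregular is also correct: since $\tau$ is an involution, the divisor contracted by the resolved map is sent onto the base locus of $\tau^{-1}=\tau$, which is again $C_4$.

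The gap is that the heart of the theorem --- the \emph{existence} of a solution to your ansatz --- is asserted rather than established. Writing $\tau=[L_0:\cdots:L_4]$ with each $L_i$ a general linear combination of $f_0,\dots,f_5$ gives $30$ unknowns, and the conditions $\tau(Q)\subseteq Q$ and $\tau\circ\tau=\mathrm{id}$ modulo the equation of $Q$ form a large nonlinear system with no a priori reason to be consistent; indeed your ``natural first guess'' $[f_4:af_3:bf_2:af_1:f_0]$ already fails in case (2), where the working involution has $L_0=f_4+\tfrac{s^2}{2}f_2-\tfrac{1}{16}f_0$ and $L_1=\tfrac{s}{4}f_1+sf_3$. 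The difficulty is most acute precisely in case (1), which carries two free parameters $\lambda,\mu$; the paper sidesteps the computation there by a genuinely structural argument: it re-embeds $X$ as $\widetilde{H}\cap g(\widetilde{H})$ in the blow-up of $\P^5$ along the Veronese surface $S_4$, where $g$ is the lift of the classical adjugate involution $M\mapsto\mathrm{adj}(M)$ on symmetric $3\times3$ matrices, whose square is the identity because $\mathrm{adj}(\mathrm{adj}(A))=\det(A)A$ for $3\times 3$ matrices. Existence of the involution then comes for free, for every hyperplane $H$, and the only computation left is matching the hyperplane to the parameters $(\lambda,\mu)$ and checking commutation with $\Z_2\times\Z_2$. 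Without either carrying out your solve in full or importing an argument of this kind, your proposal does not yet prove the theorem in case (1) (nor in case (3), where, as you note, no symmetry constrains the ansatz and the full system must be solved explicitly).
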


\begin{proof}
We proceed case-by-case, according to the classification in Theorem \ref{theorem:classification}. 

\subsection*{Case (1): $Q$ is given by $\mu(f_0+f_4)+\lambda f_2+f_5=0$}
Observe that the linear system of quadrics which contain $C_4$ is 5-dimensional, so it is more natural to express members of family \textnumero2.21 in terms of fourfolds. Let us show how to do this. Fix the Veronese surface $S_4\subset\P^5$ given by the embedding:
    \begin{align*}
    \upsilon\colon\P^2&\to \P^5 \\ 
    [x:y:z]&\mapsto[x^2:xy:y^2:yz:z^2:xz].
    \end{align*}

The space of global sections of $\mathcal{I}_{S_4}(2)$ is generated by the quadratic forms:
    \begin{align*}
g_0 &= x_3^2 - x_2 x_4,\\
g_1 &= x_3 x_5 - x_1 x_4,\\ 
g_2 &= x_5^2 - x_0 x_4, \\ 
g_3 &= x_1 x_5-x_0 x_3,   \\ 
g_4 &= x_1^2 - x_0 x_2, \\ 
g_5 &= x_1 x_3 - x_2 x_5,
\end{align*}
where $x_0,x_1,x_2,x_3,x_4,x_5$ are homogeneous coordinates on $\P^5$. \\ 

Consider the following rational map:
    \begin{align*}
        \phi\colon\P^5&\dashrightarrow\P^5 \\ 
        [x_0:x_1:x_2:x_3:x_4:x_5]&\mapsto [g_0:g_1:g_2:g_3:g_4:g_5].
    \end{align*}
    
I claim that $\phi$ is a birational involution. The following observation is due to I. Dolgachev: we can identify $\P^5$ with the space of symmetric $3\times3$ matrices, upto scaling. Then under this identification, the rational map above is:
    \begin{align*}
	\phi\colon\mathbb{P}^5 &\dashrightarrow \mathbb{P}^5 \\
	\begin{pmatrix} x_0 & x_1 & x_5 \\ x_1 & x_2 & x_3 \\ x_5 & x_3 & x_4\end{pmatrix} &\mapsto \begin{pmatrix} x_3^2 - x_2 x_4 & x_3 x_5 - x_1 x_4 & x_1 x_3 - x_2 x_5 \\ x_3 x_5 - x_1 x_4 & x_5^2 - x_0 x_4 & x_1 x_5-x_0 x_3 \\ x_1 x_3 - x_2 x_5 & x_1 x_5-x_0 x_3 & x_1^2 - x_0 x_2\end{pmatrix}
\end{align*}
But this is the same map as taking a matrix $M$ to its adjoint $\mathrm{adj}(M)$. Thus it follows from the relation $\mathrm{adj}(\mathrm{adj}(A))=\mathrm{det}(A)^{n-2}A$ for any $n\times n$ matrix $A$ that $\phi$ is a birational involution. \\ 

Let $\sigma\colon\widetilde{\P}^5\to\P^5$ be the blow-up of $\P^5$ in $S_4$, and let $E$ be the exceptional divisor. Observe that for general divisors $\widetilde{H}\in|\sigma^*\O_{\P^5}(1)|$ and $\widetilde{Q}\in|\sigma^*\O_{\P^5}(2)-E|$, we have that $\widetilde{H}\cap \widetilde{Q}$ is a smooth element of \textnumero2.21. 

Since $\phi$ has base locus equal to $S_4$, it lifts to a biregular involution $g\in\mathrm{Aut}(\widetilde{\P}^5)$ which swaps the linear systems $|\widetilde{H}|$ and $|\widetilde{Q}|$. Thus, the intersection $\widetilde{H}\cap g(\widetilde{H})$ is $\langle g\rangle$-invariant, for any $\widetilde{H}$. We will now show that every smooth element $X$ of \textnumero2.21 which is in case (1) of Theorem \ref{theorem:classification} is isomorphic to a subvariety of $\widetilde{\P}^5$ of the form $\widetilde{H}\cap g(\widetilde{H})$, for some hyperplane $H\subset\P^5$, and therefore possesses an involution not coming from $\Aut$. \\ 

So fix such a threefold $X$. Then the quadric $Q$ is given by the equation $$\mu(f_0+f_4)+\lambda f_2+f_5=0,$$
for some $\lambda\in\C\setminus\{1,-3\}$ and $\mu\in\C\setminus\{2,-2\}$ such that $\mu^2\neq-\lambda^2-2\lambda+3$. Let us choose roots $a,b$ of the equations 
\begin{align*}
    (\mu+2)x^4+2\lambda-2 &=0, \\ 
    (\mu + 2)x^4+\mu-2 &=0,
\end{align*}
respectively, so that the equation of $Q$ becomes:
\begin{equation}
\label{eq}
    \frac{2 - 2b^4}{1 + b^4}(f_0+f_4)+ \frac{1-2a^4 + b^4}{1 + b^4}f_2 + f_5=0.
\end{equation}
Now consider the following hypersurfaces in $\P^5$:
\begin{align*}
	H&=\{x_0=a^2x_2+b^2x_4\},\\
    Q_2&=\{g_0=a^2g_2+b^2g_4\}.
\end{align*}
We have that the intersections $H\cap S_4$ and $Q_2\cap S_4$ are smooth, so that the intersection of their strict transforms, $\widetilde{H}\cap\widetilde{Q}_2\subset\widetilde{\P}^5$, is a smooth member of \textnumero2.21. Moreover, $Q_2=\phi(H)$, so that $\widetilde{H}\cap \widetilde{Q}_2$ is $\langle g\rangle$-invariant. Consider the projective transformation $\psi\colon\P^5\to\P^5$ given by the matrix\footnote{The matrix defining $\psi$ comes from the embedding $\mathrm{PGL}_3(\C)\hookrightarrow\mathrm{PGL}_6(\C)$, which is given by the projectivisation of the symmetric square, $\P(\mathrm{Sym}^2(\C^3))\cong\P^5$, of the standard $\mathrm{GL}_3(\C)$ action. The Veronese surface $S_4$ is invariant under this action, so that $\psi(Q_2)$ contains $S_4.$}
$$
\begin{pmatrix}
1 & 0 & 0 & 0 & b^2 & -2b \\
0 & -a & 0 & ab & 0 & 0 \\
0 & 0 & a^2 & 0 & 0 & 0 \\
0 & -a & 0 & -ab & 0 & 0 \\
1 & 0 & 0 & 0 & b^2 & 2b \\
1 & 0 & 0 & 0 & -b^2 & 0 \\
\end{pmatrix}.
$$
Then the intersection of the fourfolds
\begin{align*}
    \psi(H)&=\{x_2-x_5=0\}\\
    \psi(Q_2)&=\{(1-b^4)(g_0+g_4)-a^4g_2+2(1+b^4)g_5=0\}
\end{align*}
is given as a subvariety of $\P^4$ by Equation \ref{eq}, which defines $X$. Thus $X\cong \widetilde{H}\cap\widetilde{Q}_2$. \\

It remains to show that the birational involution $g$ commutes with the action of $\text{Aut}(Q,C_4)$. \\ 

Consider the subgroup $G\subset\mathrm{PGL}_6(\C)$ generated by the commuting involutions
    \begin{align*}
        \alpha\colon[x_0:x_1:x_2:x_3:x_4:x_5]&\mapsto[x_0:x_1:x_2:-x_3:x_4:-x_5]\\
        \beta\colon[x_0:x_1:x_2:x_3:x_4:x_5]&\mapsto[x_0:-x_1:x_2:-x_3:x_4:x_5].
    \end{align*}
    Then $\alpha$ and $\beta$ commute with the birational involution described previously:
    \begin{align*}
        \phi\colon[x_0:x_1:x_2:x_3:x_4:x_5]\mapsto [x_3^2 - x_2 x_4&: x_3 x_5 - x_1 x_4:x_5^2 - x_0 x_4: \\ 
        &:x_1 x_5-x_0 x_3: x_1^2 - x_0 x_2:x_1 x_3 - x_2 x_5],
    \end{align*}

    The hypersurfaces $H$ and $Q_2$ are $G$-invariant. Moreover $S_4$ is $G$-invariant, so that $G$ is isomorphic to a subgroup of $\Aut$. Thus since $\mathrm{Aut}(Q,C_4)\cong\Z_2\times\Z_2$ by Theorem \ref{Aut(Q,C)}, we have that $\mathrm{Aut}(Q,C_4)\cong G$. So we see that $\Aut$ commutes with the involution $g$.\\

For the remaining cases, we will compute bases for the linear system $\mathfrak{d}$ of quadrics sections of $Q\subset\P^4$ containing the curve $C_4$ such that the corresponding rational map is an involution, and commutes with the action of $\Aut$. \\

\subsection*{Case (2): $Q$ is given by $f_0+\lambda f_2+f_5=0$}
    Let us make the substitution $\lambda=1-4s^2$, for some $s\in\mathbb{C}\setminus\{-1,0,1\}$. Consider the rational map:
\begin{align*}
    \iota\colon Q&\dashrightarrow\P^4 \\ 
    [x_0:x_1:x_2:x_3:x_4]&\mapsto[f_4+\frac{s^2}{2}f_2-\frac{1}{16}f_0: \frac{s}{4}f_1+sf_3: s^2f_2: sf_1: f_0].
\end{align*} 
    Observe that it has base locus equal to $C_4$, so indeed corresponds to the linear system $\mathfrak{d}$. To see that the map $\iota$ is a birational involution, consider the following rational parametrisation of $Q$, 
    \begin{align*}
        p\colon \P^3&\dashrightarrow Q \\ 
        [x_0:x_2:x_3:x_4]&\mapsto[x_0x_3 : s^2x_0x_4 - s^2x_2^2 + x_2^2 - \frac{1}{4}x_2x_4 + \frac{1}{4}x_3^2 : x_2x_3 : x_3^2 : x_3x_4].
    \end{align*}

   This is a rational inverse to the projection $Q\dashrightarrow\P^3$ from the point $[0:1:0:0:0]$. Moreover, it is an isomorphism between the open subsets $\P^3\setminus\Pi$ and $Q\setminus V$, where $\Pi\subset\P^3$ is the plane given by $x_3=0$, and $V\subset Q$ is the singular quadric surface given by the intersection of $Q$ with the plane $x_3=0$, this latter variety being the closure of the union of lines through $[0:1:0:0:0]$. Let $Z$ be the curve $p^{-1}(C_4)$, which is a quartic rational curve in $\P^3$. \\ 
    
Then $\iota(p(\P^3\setminus (\Pi\cup Z))$ lies in $Q$, and since $p(\P^3\setminus (\Pi\cup Z))=Q\setminus(V\cup C_4)$ is dense in $Q\setminus C_4$, it follows that $\iota$ is a rational self-map of $Q$. To see that $\iota$ is an involution on $Q\setminus C_4$, observe that $\iota\circ\iota\circ p $ is equal to the map 
    \begin{align*}
        \P^3&\dashrightarrow Q \\ 
        [x_0:x_1:x_2:x_3]&\mapsto [x_0: \frac{(-4s^2 + 4)x_2^2 - x_2x_4 + 4s^2x_0x_4 + x_3^2}{4x_3}: x_2: x_3: x_4],
    \end{align*}
    which is equal to the identity morphism on $\P^3\setminus \Pi$. Thus $\iota\circ\iota$ is equal to the identity morphism on $Q\setminus(V\cup C_4)$, so that it is equal to the identity morphism on $Q\setminus C_4.$ \\

Let us prove that $\iota$ commutes with the action of $\Aut$. If $s\neq\pm\frac{1}{2}$ then by Theorem \ref{Aut(Q,C)}, $\Aut=\langle g_1\rangle$, where $g_1$ is the linear transformation
$$[x_0:x_1:x_2:x_3:x_4]\mapsto[x_0:-x_1:x_2:-x_3:x_4].$$
Then it is plain that $\iota$ commutes with $g_1$. If $s=\pm\frac{1}{2}$, then $Q$ is the quadric described in Example \ref{Ga invariant}, and $\Aut$ contains subgroup isomorphic to $\Ga$ consisting of automorphisms of the form
\begin{align*}
[x_0:x_1:x_2:x_3:x_4]\mapsto[x_0+4tx_1+6t^2x_2+4t^3x_3+t^4x_4&:x_1+3tx_2+3t^2x_3+x_4: \\ 
&:x_2+2tx_3+t^2x_4:x_3+tx_4:x_4],
\end{align*}
for every $t\in\C$. One can easily see that $\iota$ commutes with each of these automorphisms.

\subsection*{Case (3): $Q$ is given by $f_1+f_5=0$}
    Let $\sigma$ be the rational map 
\begin{align*}
        \sigma\colon Q&\dashrightarrow\P^4 \\ 
        [x_0:x_1:x_2:x_3:x_4]&\mapsto[\frac{1}{64}f_0 - \frac{1}{4}f_1 + \frac{3}{2}f_2 + 16f_3 + 64f_4 :\\
        &:-\frac{1}{8}f_0 + \frac{3}{2}f_1 + 2f_2 + 32f_3 :f_0 - 8f_1 + 16f_2:-8f_0 + 32f_1:64f_0].
\end{align*}
     By replacing the map $\iota$ with $\sigma$, the map $p$ by the rational parametrisation
    \begin{align*}
        \P^3&\dashrightarrow Q \\ 
        [x_1:x_2:x_3:x_4]&\mapsto[\frac{4x_1x_3 + x_1x_4 - 3x_2^2 - x_2x_3}{x_4}:x_1:x_2:x_3:x_4],
    \end{align*}
    and the varieties $\Pi$ and $V$ with $\{x_4=0\}$,
    it follows verbatim from the proof of case (2) that $\sigma$ is a birational involution of $Q$ with base locus equal to $C_4$. By Theorem \ref{Aut(Q,C)}, $\Aut$ is trivial in this case, so there is nothing left to prove. 
\end{proof}

\printbibliography

\end{document}